\newtheorem{thm}{Theorem}[section]
\newtheorem{cor}[thm]{Corollary}
\theoremstyle{definition}
\newtheorem{example}[thm]{Example}
\newcommand{\F}{\mathbb{F}}
\begin{document}

\vspace*{0.5cm}

\begin{center}
{\Large\bf Doubly even self-orthogonal codes from quasi-symmetric designs}
\end{center}

\vspace*{0.5cm}

\begin{center}
					
		Dean Crnkovi\'c  (deanc@math.uniri.hr) \\[3pt]
		Doris Dumi\v ci\'c Danilovi\'c (ddumicic@math.uniri.hr) \\[3pt]
		Ana \v Sumberac (ana.sumberac@math.uniri.hr)\\[3pt]
		and \\
		Andrea \v Svob (asvob@math.uniri.hr)\\[3pt]
		{\it\small Faculty of Mathematics} \\
		{\it\small University of Rijeka} \\
		{\it\small Radmile Matej\v ci\'c 2, 51000 Rijeka, Croatia}\\
\end{center}

\vspace*{0.5cm}

\begin{abstract}
In this paper, we give a construction of doubly even self-orthogonal codes from quasi-symmetric designs. Further, we study orbit matrices of quasi-symmetric designs and give a construction of doubly even self-orthogonal codes from orbit matrices of quasi-symmetric designs of Blokhuis-Haemers type.
\end{abstract}

\bigskip

{\bf 2020 Mathematics Subject Classification:} 05B05, 94B05.

{\bf Keywords:} quasi-symmetric design, self-orthogonal code, doubly even code, orbit matrix.

\section{Introduction} \label{intro}

Construction and classification of self-orthogonal codes is an active field of research, see e.g. \cite{classification-40, Conway, dc-om-srg, so-codes-2-des, Harada, H-T-fpf, ak-aw, Pless}. Among self-orthogonal, especially self-dual codes, doubly even codes attract special attention (see \cite{classification-40, dc-as, Miezaki, RainsSloane}). In this paper, we are dealing with a construction of doubly even self-orthogonal linear codes from incidence matrices and orbit matrices of quasi-symmetric designs. In particular, we consider constructions from quasi-symmetric designs of Blokhuis-Haemers type. 
As an illustration of the method, we constructed doubly even self-orthogonal codes from orbit matrices of quasi-symmetric designs of Blokhuis-Haemers type with parameters 2-(64,24,46). Some of the codes constructed are optimal, and also two-weight codes.

\bigskip

For computations in this paper we used Magma \cite{magma}.

\bigskip

The paper is organized as follows. In Section \ref{prelim}, we give basic definitions and theorems used in the paper. In Section \ref{code-QS}, we present constructions of doubly even self-orthogonal codes from incidence matrices of quasi-symmetric designs, including a construction from quasi-symmetric designs of Blokhuis-Haemers type, and in Section \ref{code-QS-orbmat}, we give constructions from orbit matrices of quasi-symmetric designs.

\section{Preliminaries} \label{prelim}

A $t$-$(v,k,\lambda)$ \emph{design} is a finite incidence structure \( {\mathcal{D}}=({\mathcal{P},\mathcal{B}},\mathcal{I}) \), where \( {\mathcal{P}} \) and \( {\mathcal{B}} \) are disjoint sets
and \( \mathcal{I}\subseteq {\mathcal{P}}\times {\mathcal{B}} \), with the following properties:

\begin{description}
\item [1.]\( |{\mathcal{P}}|=v \);
\item [2.]every element of \( {\mathcal{B}} \) is incident with exactly \( k \)
elements of \( {\mathcal{P}} \); 
\item [3.]every $t$ distinct elements of \( {\mathcal{P}} \) are incident
with exactly
$\lambda$ elements of \nolinebreak ${\mathcal{B}}$.
\end{description}
A $t$-$(v,k,\lambda)$ design is also a $s$-$(v,k,\lambda_s)$ design for any $0 \le s \le t$.
The elements of the set \( {\mathcal{P}} \) are called \emph{points} and the elements of the set \( {\mathcal{B}} \) are called \emph{blocks}. 
In a 2-$(v, k, \lambda)$ design every point is incident with exactly $r = \frac{\lambda (v-1)}{k-1}$ blocks, and $r$ is called the replication number of a design.
The number of blocks of a $t$-design is denoted by $b$. 
If $b=v$, a $t$-design is called \emph{symmetric}. If ${\mathcal{D}}$ is a symmetric $t$-$(v,k,\lambda)$ design, then $t \le 2$.

A \emph{complement} of a $t$-design \( {\mathcal{D}}=({\mathcal{P},\mathcal{B}},\mathcal{I}) \) is the design \( {\mathcal{D'}}=({\mathcal{P},\mathcal{B}},\mathcal{I'}) \),
where $\mathcal{I'}=({\mathcal{P}}\times {\mathcal{B}}) \backslash \mathcal{I}$. If ${\mathcal{D}}$ is a symmetric design with parameters $2$-$(v,k,\lambda)$, 
then its complement ${\mathcal{D'}}$ is a symmetric design with parameters $2$-$(v,v-k,v-2k+\lambda)$.  

Let $\mathcal{D}=(\mathcal{P},\mathcal{B},\mathcal{I})$ be a $t$-$(v,k,\lambda)$ design.
For $0 \le s < k$, $s$ is called a block intersection number of $\mathcal{D}$ if there exists $x, x' \in \mathcal{B}$ such that $|x \cap x'|=s$.
A $t$-design is called \emph{quasi-symmetric} if it has exactly two block intersection numbers $x$ and $y$ points, $x < y$. A complement of a quasi-symmetric design is also quasi-symmetric.
For reading on quasi-symmetric designs we refer the reader to \cite{QSdes-CRC, QSdes-MSS}.

The block-by-point \emph{incidence matrix} of a $t$-$(v,k,\lambda)$ design is the $b \times v$ matrix whose rows are indexed by blocks and whose columns are indexed by points, with the entry in the
row $x$ and column $P$ being 1 if $(P,x) \in \mathcal{I}$, and 0 otherwise. 

A $q$-ary \emph{linear code} $C$ of dimension $k$ for a prime power $q$, is a $k$-dimensional subspace of a vector space $\mathbb{F}_{q}^n$. 
Elements of $C$ are called codewords. If $q=2$, a code $C$ is called \emph{binary}.
Let $x=(x_1,...,x_n)$ and $y=(y_1,...,y_n)\in \mathbb{F}_q^n$. The \emph{Hamming distance} between words $x$ and $y$ is the number \ $d(x,y)=\left| \{ i  :  x_i \neq y_i \} \right| $.
The \emph{minimum distance} of the code  $C$ is defined by \ $d=\mbox{min}\{ d(x,y):  x,y\in C, \ x\neq y\}$.  The \emph{weight} of a codeword $x$ is \ $w(x)=d(x,0)=|\{i  :  x_i\neq 0 \}|$. A code for which all codewords have weight divisible by four is called \emph{doubly even}, and \emph{singly even} if all weights are even and there is at least one codeword $x$ with $w(x) \equiv  2 \mod 4$. 
For a linear code, $d=\mbox{min}\{ w(x)  :  x \in C, x \neq 0 \}.$
A  $q$-ary linear code of length $n$, dimension $k$, and minimum distance $d$ is called a $[n,k,d]_q$ code.  We may use the notation $[n,k]$ if the parameters $d$ and $q$ are unspecified. 

Let $w_{i}$ denote the number of codewords of weight $i$ in a code $C$ of 
length $n$.  The \emph{weight distribution} of $C$ is the list $[\langle i, w_{i} \rangle: 0 \leq i \leq n]$.
A \emph{one-weight code} is a code which has only one nonzero weight and a \emph{two-weight code} is a code which has only two nonzero weights. 
A linear code $C$ is called projective if the minimum distance of
its dual code $C^\perp =\{ y \ | \ y\cdot x = 0 \ {\rm for} \ {\rm all} \ x\in C \}$
is greater than 2.

The \emph{dual} code $C^\perp$ is the orthogonal complement under the standard inner product
$\langle\cdot \, ,\cdot\rangle$, i.e.\ $C^{\perp} = \{ v \in \F_{q}^n : \langle v,c\rangle=0 {\rm \ for\ all\ } c \in C \}$.  
A code $C$ is \emph{self-orthogonal} if $C \subseteq C^\perp$ and \emph{self-dual} if $C = C^\perp$. 
A doubly even self-dual code of length $n$ exists if and only if $n \equiv 0 \mod 8$, while a singly even self-dual code of length $n$ exists if and only if $n$ is even (see \cite{Harada}). 
Doubly even self-dual binary codes of lengths less or equal 40 have been completely classified (see \cite{classification-40}).
Rains in \cite{Rains} showed that the minimum weight $d$ of a self-dual code $C$ of length $n$ is bounded by $d \le 4 \lfloor \frac{n}{24} \rfloor +4$, unless $n \equiv 22 \mod 24$ when 
$d\le 4 \lfloor \frac{n}{24} \rfloor+6$. 
A self-dual code meeting the upper bound is called \emph{extremal}. We say that a code is \emph{optimal} if its minimum weight achieves the theoretical upper bound on the minimum weight of $[n,k]$ linear codes, and \emph{best known} if it has the largest minimum weight among all known codes of that length and dimension.

A \emph{generator matrix} of a linear code is a matrix whose rows form a basis for a code. It is well known that a binary $[n,k]$ code is self-orthogonal if and only if the rows of its generator matrix have even weight and are orthogonal to each other. The following theorem can be found in
\cite[Theorem 1.4.8]{Huffman}.

\begin{thm} \label{thm-self-orth}
Let $C$ be a binary linear code.
  \begin{enumerate}
    \item If C is self-orthogonal and has a generator matrix each of whose rows has weight divisible by four, then every codeword of $C$ has weight divisible by four.
    \item If every codeword of $C$ has weight divisible by four, then $C$ is self-orthogonal.
  \end{enumerate}
\end{thm}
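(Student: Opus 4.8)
The plan is to reduce both parts to the elementary identity relating the weight of a sum of two binary vectors to their individual weights. For $x,y\in\F_2^{\,n}$, let $x\ast y$ denote the vector with $i$-th coordinate $x_iy_i$, so that $w(x\ast y)$ counts the coordinates in which both $x$ and $y$ equal $1$. A coordinatewise case analysis gives
\[
w(x+y)=w(x)+w(y)-2\,w(x\ast y),
\]
and, since the standard inner product over $\F_2$ is precisely the common-support count reduced modulo $2$, also $\langle x,y\rangle\equiv w(x\ast y)\pmod 2$. These two observations are all that is needed.

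For part (2), let $x,y\in C$. Linearity gives $x+y\in C$, so by hypothesis each of $w(x)$, $w(y)$, $w(x+y)$ is divisible by $4$. The identity then yields $2\,w(x\ast y)=w(x)+w(y)-w(x+y)\equiv 0\pmod 4$, whence $w(x\ast y)$ is even and $\langle x,y\rangle\equiv w(x\ast y)\equiv 0\pmod 2$. Since $x,y$ were arbitrary, $C\subseteq C^{\perp}$.

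For part (1), let $g_1,\dots,g_k$ be the rows of the prescribed generator matrix, each of weight divisible by $4$. An arbitrary codeword can be written as $c=\sum_{i\in S}g_i$ for some $S=\{i_1,\dots,i_m\}\subseteq\{1,\dots,k\}$; put $c_0=0$ and $c_j=c_{j-1}+g_{i_j}$, so that $c=c_m$. I would prove by induction on $j$ that $w(c_j)\equiv 0\pmod 4$. The base case $j=0$ is immediate. For the inductive step, $c_{j-1}$ and $g_{i_j}$ both lie in $C$, which is self-orthogonal, so $\langle c_{j-1},g_{i_j}\rangle\equiv 0\pmod 2$; hence $w(c_{j-1}\ast g_{i_j})$ is even and $2\,w(c_{j-1}\ast g_{i_j})\equiv 0\pmod 4$. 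Applying the identity,
\[
w(c_j)=w(c_{j-1})+w(g_{i_j})-2\,w(c_{j-1}\ast g_{i_j}),
\]
which is a sum of three multiples of $4$ by the inductive hypothesis and the hypothesis on $g_{i_j}$; thus $w(c_j)\equiv 0\pmod 4$, and the induction is complete.

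There is no genuine obstacle in this argument. The only points demanding a little care are to organise part (1) as an induction on the number of generator rows successively added, rather than attempting to handle a general linear combination in one step, and to keep firmly in mind that $w(x\ast y)$ denotes an integer (the size of the common support) while $\langle x,y\rangle$ is merely its residue modulo $2$.
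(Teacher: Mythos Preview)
Your proof is correct. The paper does not actually prove this statement: it merely cites \cite[Theorem~1.4.8]{Huffman} and moves on, so there is no in-paper argument to compare against. What you have written is essentially the standard textbook proof (and indeed the one given in Huffman--Pless), hinging on the identity $w(x+y)=w(x)+w(y)-2\,w(x\ast y)$ together with $\langle x,y\rangle\equiv w(x\ast y)\pmod 2$; the inductive organisation of part~(1) is the clean way to handle arbitrary linear combinations, and your remark distinguishing the integer $w(x\ast y)$ from its mod-$2$ residue $\langle x,y\rangle$ is exactly the subtlety worth flagging.
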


The following statement can be found in \cite{Tonch}.

\begin{thm} \label{thm-design}
Assume that $\mathcal{D}$ is a $2$-$(v,k,\lambda)$ design with block intersection numbers $s_1,s_2,\dots,s_m.$ Denote by $C$ the binary code spanned by the block-by-point incidence matrix of 
$\mathcal{D}$. If $v \equiv 0 \mod 8,\ k \equiv 0 \mod 4$, and $s_1,s_2,\dots,s_m$ are all even, then $C$ is contained in a doubly even self-dual code of length $v$.
\end{thm}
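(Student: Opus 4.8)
\medskip
\noindent\textbf{Proof strategy.} The plan is to split the statement into two stages: first show that the code $C$ itself is doubly even and self-orthogonal, and then show that every doubly even self-orthogonal binary code of length divisible by $8$ is contained in a doubly even self-dual code of the same length.

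The first stage is routine. Let $M$ be the block-by-point incidence matrix of $\mathcal{D}$. Every row of $M$ has weight $k \equiv 0 \pmod 4$, and for two distinct blocks $x,x'$ the standard inner product over $\F_2$ of the corresponding rows equals $|x\cap x'| \bmod 2$, which is one of $s_1,\dots,s_m$ and hence even. Thus $MM^\top = 0$ over $\F_2$, so $C=\langle M\rangle_{\F_2}\subseteq C^\perp$, i.e.\ $C$ is self-orthogonal. Extracting a maximal $\F_2$-linearly independent set of rows of $M$ yields a generator matrix of $C$ all of whose rows have weight $k\equiv 0 \pmod 4$, so Theorem \ref{thm-self-orth}(1) shows that every codeword of $C$ has weight divisible by four. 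Hence $C$ is a doubly even self-orthogonal code of length $v$, with $v\equiv 0 \pmod 8$.

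The second stage is the substantive one, and I would prove it by descending induction on $\dim(C^\perp/C)=v-2\dim C$. If $C=C^\perp$ there is nothing to do. Otherwise set $V=C^\perp/C$. The standard inner product induces a well-defined bilinear form on $V$ which is non-degenerate (since $(C^\perp)^\perp=C$) and alternating, so $\dim V$ is even and, being positive, is $\ge 2$. Moreover $q(\overline{u})=\tfrac12 w(u)\bmod 2$ is a well-defined $\F_2$-valued quadratic form on $V$ whose polar form is that bilinear form; well-definedness uses exactly that $C$ is doubly even and $u\in C^\perp$, so that every element of the coset $u+C$ has weight congruent to $w(u)$ modulo $4$. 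The key point is to exhibit a nonzero $\overline{v}\in V$ with $q(\overline{v})=0$: adjoining a representative $v$ to $C$ then enlarges the dimension by one, keeps the code doubly even self-orthogonal (its generator matrix acquires the extra row $v$ of weight $\equiv 0 \pmod 4$; apply Theorem \ref{thm-self-orth}(1) once more), and decreases $\dim(C^\perp/C)$ by $2$, so the induction closes and terminates at a doubly even self-dual code of length $v$ containing $C$.

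This existence step is the main obstacle, and it is where $v\equiv 0 \pmod 8$ enters. I would evaluate the Gauss sum $\sum_{x\in C^\perp} i^{w(x)}$ (with $i^2=-1$) in two ways. By Poisson summation over $C$, together with the fact that $C$ is doubly even (so the relevant character values on $C$ are trivial), one gets $\sum_{x\in C^\perp} i^{w(x)}=(1+i)^v=2^{v/2}$, using $(1+i)^8=16$ and $8\mid v$. Grouping $C^\perp$ into cosets of $C$ and using the definition of $q$ gives $\sum_{x\in C^\perp} i^{w(x)}=2^{\dim C}\big(\#\{q=0\}-\#\{q=1\}\big)$. Comparing, $\#\{q=0\}-\#\{q=1\}=2^{\dim V/2}>0$, so $(V,q)$ is a hyperbolic (plus-type) quadratic space, which has nonzero isotropic vectors whenever $\dim V\ge 2$. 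An alternative would be to simply cite the known extension theorem for doubly even self-orthogonal codes, but the Gauss-sum computation is self-contained and makes transparent why the congruence $v\equiv 0 \pmod 8$ is needed.
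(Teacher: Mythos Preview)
The paper does not prove this theorem at all; it is simply quoted from Tonchev's survey \cite{Tonch}. Your two-stage argument is the standard proof of this extension result and is correct in outline, so you are supplying what the paper omits. There is, however, one small gap.

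Your quadratic form $q(\overline{u})=\tfrac12\,w(u)\bmod 2$ is $\F_2$-valued only if every $u\in C^\perp$ has \emph{even} weight, i.e.\ only if the all-ones vector $\mathbf{1}$ lies in $(C^\perp)^\perp=C$. Nothing in the hypotheses forces $\mathbf{1}\in C$, and your justification (``well-definedness uses exactly that $C$ is doubly even and $u\in C^\perp$'') only addresses independence of the coset representative, not the range of $q$. Without this, the identity $\#\{q=0\}-\#\{q=1\}=2^{\dim V/2}$ does not account for the cosets of odd weight, whose contributions to $\sum_{x\in C^\perp} i^{w(x)}$ are purely imaginary. The fix is immediate: since each row of $M$ has weight $k\equiv 0\pmod 2$ we have $\mathbf{1}\in C^\perp$, and $w(\mathbf{1})=v\equiv 0\pmod 8$, so $C'=C+\langle\mathbf{1}\rangle$ is again doubly even and self-orthogonal; replace $C$ by $C'$ at the outset (any doubly even self-dual code contains $\mathbf{1}$ anyway, so nothing is lost). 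With $\mathbf{1}\in C$ every element of $C^\perp$ has even weight, $q$ is genuinely $\F_2$-valued, and your Gauss-sum computation and induction go through exactly as written. Equivalently, in the inductive step you may simply take $\mathbf{1}$ itself as the required nonzero isotropic vector whenever $\mathbf{1}\notin C$.
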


A \emph{strongly regular graph} with parameters $(v, k, \lambda, \mu)$ is a finite simple graph on $v$ vertices, regular of degree $k$, and such that any two distinct vertices have $\lambda$ common neighbours when they are adjacent and  $\mu$  common neighbours when they are not adjacent. A projective two-weight code yields a strongly regular graph. 
Let $w_1$ and $w_2$ (suppose  $w_1 < w_2$) be two weights of a projective q-ary two-weight $[n, k, d]$ code. A strongly regular graph can be constructed as follows (see \cite{two-weight}). 
The vertices of the graph are identified by the codewords, and two vertices $u$ and $v$ are adjacent if and only if $d(u, v) = w_1$. The parameters of the strongly regular graph constructed in that way are 
$(v, K, \lambda, \mu)$, where
$$v=q^k, \qquad K=n (q-1),$$
$$\lambda = K^2+3K-q(w_1+w_2)-Kq(w_1+w_2)+q^2w_1w_2,$$
$$\mu = w_1w_2q^{2-k}=K^2+K-Kq(w_1+w_2)+q^2w_1w_2.$$

\section{Doubly even self-orthogonal codes from incidence matrices of quasi-symmetric designs} \label{code-QS}

The following theorem is a consequence of Theorem \ref{thm-design}.

\begin{thm} \label{thm_qs}
Let $\mathcal{D}$ be a quasi-symmetric design with $v \equiv 0 \mod 8,\ k \equiv 0 \mod 4$ and even block intersection numbers $x$ and $y$.
Further, let $M$ be a block-by-point incidence matrix of $\mathcal{D}$ and $C$ be a binary code spanned by the rows of $M$. 
Then $C$ is contained in a doubly even self-dual binary linear code of length $v$.
\end{thm}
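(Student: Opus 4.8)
The plan is to deduce this immediately from Theorem \ref{thm-design} by checking its hypotheses. Theorem \ref{thm-design} already does the heavy lifting: it says that if a $2$-$(v,k,\lambda)$ design has $v\equiv 0\bmod 8$, $k\equiv 0\bmod 4$, and all block intersection numbers even, then the binary code spanned by its block-by-point incidence matrix is contained in a doubly even self-dual code of length $v$. So the task reduces to observing that a quasi-symmetric design is, by definition, a $2$-design (or at least a $t$-design with $t\ge 2$, hence also a $2$-design by the remark in the preliminaries) whose block intersection numbers are exactly the two values $x$ and $y$.

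First I would recall that a quasi-symmetric design $\mathcal{D}$ is a $t$-$(v,k,\lambda)$ design with $t\ge 2$ and exactly two block intersection numbers $x<y$; since every $t$-design is also an $s$-design for $s\le t$, $\mathcal{D}$ is in particular a $2$-$(v,k,\lambda_2)$ design. Next I would note that the hypotheses of the present theorem are precisely tailored so that the numerical conditions of Theorem \ref{thm-design} hold: we are given $v\equiv 0\bmod 8$ and $k\equiv 0\bmod 4$, and we are given that the two block intersection numbers $x$ and $y$ are even, so the list $s_1,\dots,s_m$ of block intersection numbers of $\mathcal{D}$ (which here is just $x,y$) consists of even integers. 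Then Theorem \ref{thm-design} applies verbatim with $\mathcal{D}$ and with $C$ the binary code spanned by the block-by-point incidence matrix $M$, and yields that $C$ is contained in a doubly even self-dual binary linear code of length $v$.

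There is essentially no obstacle here — the statement is a direct specialization of Theorem \ref{thm-design} to the quasi-symmetric case, and the proof amounts to matching up the hypotheses. The only mild point worth spelling out is that a quasi-symmetric design is a $2$-design (rather than merely, say, a $1$-design) so that Theorem \ref{thm-design} is genuinely applicable; this follows from the standard fact, recorded in Section \ref{prelim}, that a $t$-design is also an $s$-design for every $0\le s\le t$. Once that is noted, the conclusion is immediate.

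\begin{proof}
By definition, a quasi-symmetric design $\mathcal{D}$ is a $t$-$(v,k,\lambda)$ design with $t\ge 2$ possessing exactly two block intersection numbers $x<y$. Since every $t$-design is also an $s$-design for each $0\le s\le t$, the design $\mathcal{D}$ is in particular a $2$-$(v,k,\lambda_2)$ design for a suitable $\lambda_2$, and its full list of block intersection numbers is $x,y$. By hypothesis, $v\equiv 0\bmod 8$, $k\equiv 0\bmod 4$, and both $x$ and $y$ are even. Hence $\mathcal{D}$ satisfies all the hypotheses of Theorem \ref{thm-design}. Applying that theorem to $\mathcal{D}$, we conclude that the binary code $C$ spanned by the rows of the block-by-point incidence matrix $M$ of $\mathcal{D}$ is contained in a doubly even self-dual binary linear code of length $v$.
\end{proof}
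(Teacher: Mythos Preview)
Your proof is correct and follows exactly the paper's approach: the paper's own proof is the single sentence ``The statement is an immediate consequence of Theorem \ref{thm-design}.'' You have merely spelled out in slightly more detail why the hypotheses of Theorem \ref{thm-design} are met, which is fine but not strictly necessary.
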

\begin{proof}
The statement is an immediate consequence of Theorem \ref{thm-design}.
\end{proof}



Below we give a construction of doubly even self-orthogonal codes from quasi-symmetric designs of Blokhuis-Haemers type.

\subsection{Codes from quasi-symmetric designs of Blokhuis-Haemers type}

In 1963, Shrikhande and Raghavarao proved the following theorem.

\begin{thm} (Shrikhande and Raghavarao \cite{SR}). \label{sr}
The existence of a $2$-$(v_1, k_1, \lambda_1 )$ design ${\mathcal{D}}_1$ and a resolvable $2$-$(v_2. k_2, \lambda_2)$ design ${\mathcal{D}}_2$ with $v_2 = {v_1}k_2$ implies the existence of a 
$2$-$(v,k,\lambda)$ design ${\mathcal{D}}$ with parameters \[ v=v_1{k_2}, \ k=k_1{k_2},  \ \lambda= r_1{\lambda_2} +\lambda_1(r_2 -\lambda_2), \] where $r_i = \lambda_i(v_i -1)/(k_i -1)$, $i=1, 2$.
\end{thm}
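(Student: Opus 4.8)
The plan is to construct the design $\mathcal{D}$ explicitly and then verify its parameters by a counting argument. First I would fix a resolution of $\mathcal{D}_2$, that is, a partition of its block set $\mathcal{B}_2$ into $r_2$ parallel classes, each consisting of $v_2/k_2 = v_1$ blocks that partition the point set $\mathcal{P}_2$. Label the blocks within each parallel class by the points of $\mathcal{D}_1$, using some bijection; this is possible precisely because each parallel class has $v_1$ blocks. The point set of $\mathcal{D}$ will be $\mathcal{P}_2$, so $v = v_2 = v_1 k_2$. To build a block of $\mathcal{D}$, take any block $B_1 \in \mathcal{B}_1$ and any parallel class $R$ of $\mathcal{D}_2$, and form the union of the $k_1$ blocks of $R$ whose labels lie in $B_1$; since these blocks are pairwise disjoint and each has size $k_2$, the resulting set has $k = k_1 k_2$ points. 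Doing this over all pairs $(B_1, R)$ with $B_1 \in \mathcal{B}_1$ and $R$ a parallel class gives the block set $\mathcal{B}$ of $\mathcal{D}$.

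Next I would check the $2$-design property by counting, for two distinct points $P, Q \in \mathcal{P}_2$, the number of blocks of $\mathcal{D}$ containing both. Split into two cases according to how $P$ and $Q$ sit inside the parallel classes. Fix a parallel class $R$; either $P$ and $Q$ lie in the same block of $R$ (labelled by some point $p$ of $\mathcal{D}_1$), or they lie in distinct blocks of $R$ (labelled by distinct points $p \neq p'$). For each parallel class of the first type, the pair $\{P,Q\}$ is covered by a block $(B_1, R)$ exactly when $p \in B_1$, giving $r_1$ choices of $B_1$; the number of such parallel classes is the number of blocks of $\mathcal{D}_2$ through both $P$ and $Q$, namely $\lambda_2$. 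For each parallel class of the second type, the pair is covered when both $p, p' \in B_1$, giving $\lambda_1$ choices of $B_1$; the number of such parallel classes is $r_2 - \lambda_2$, since $P$ lies in exactly $r_2$ blocks of $\mathcal{D}_2$ (one per parallel class) and in $\lambda_2$ of them $Q$ lies in the same block. Adding the contributions gives $\lambda = r_1 \lambda_2 + \lambda_1 (r_2 - \lambda_2)$, independent of the chosen pair, which is exactly the claimed value.

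Finally I would note that $|\mathcal{B}| = b_1 r_2$, and that constancy of block size and of the pair-covering number, together with $v \geq 2$, already guarantee $\mathcal{D}$ is a genuine $2$-$(v,k,\lambda)$ design; the replication number formula $r = \lambda(v-1)/(k-1)$ then follows automatically from the general theory of $2$-designs recalled in Section~\ref{prelim}, so it need not be verified separately. The main obstacle is the bookkeeping in the two-case count above: one must be careful that the labelling of blocks within each parallel class is used consistently, and that the identity ``(number of parallel classes in which $P,Q$ share a block) $= \lambda_2$'' and ``(number in which they do not) $= r_2 - \lambda_2$'' are justified from the resolvability of $\mathcal{D}_2$ rather than assumed. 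Everything else is a direct substitution, and the hypothesis $v_2 = v_1 k_2$ is used only to make the labelling step possible.
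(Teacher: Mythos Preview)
Your construction matches the one the paper describes immediately after the statement: label the $v_1$ blocks of each parallel class of $\mathcal{D}_2$ by the points of $\mathcal{D}_1$, and take as blocks of $\mathcal{D}$ the unions indexed by blocks of $\mathcal{D}_1$. The paper, however, does not actually prove the theorem; it attributes the result to Shrikhande and Raghavarao \cite{SR}, records the construction in two sentences, and moves on without verifying the parameters. Your two-case count of $\lambda$ is correct and is the standard way to complete the argument: the key observations---that the number of parallel classes in which $P$ and $Q$ share a block equals $\lambda_2$, and that in each remaining class the two labels are distinct so exactly $\lambda_1$ blocks of $\mathcal{D}_1$ work---are exactly right, and your caution that the labels $p,p'$ vary with the parallel class but that $\lambda_1$ is independent of the pair is the only subtlety worth flagging. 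So your proposal is a genuine proof where the paper offers only a citation and a sketch of the construction.
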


The design ${\mathcal{D}}$ in Theorem \ref{sr} is constructed as follows. Let $P$ be a parallel class of $v_1$ blocks of $D_2$.
We label the blocks of $P$ with the points of ${\mathcal{D}}_1$, and define $b_1$ blocks of the new design ${\mathcal{D}}$ as unions of $k_1$ blocks from $P$ labeled by a block of ${\mathcal{D}}_1$.

If $q$ is a power of 2, the conditions of Theorem \ref{sr} are satisfied for any symmetric 2-$(q^2,q(q-1)/2, q(q-2)/4)$ design ${\mathcal{D}}_1$
and a resolvable 2-$(q^3,q,1)$ design ${\mathcal{D}}_2$, thus the Shrikhande-Raghavarao construction yields a design ${\mathcal{D}}$ with parameters 2-$(q^3, q^{2}(q-1)/2, q(q^3 -q^2 -2)/4)$.

In \cite{b-h}, Blokhuis and Haemers proved that if ${\mathcal{D}}_2$ is the resolvable 2-$(q^3, q, 1)$ design of the lines in $AG(3,q)$, where $q$ is a power of 2, and ${\mathcal{D}}_1$ is a symmetric
2-$(q^2,q(q-1)/2, q(q-2)/4)$ design whose blocks are maximal arcs in $AG(2,q)$, then the resulting 2-$(q^3,q^2(q-1)/2,q(q^3-q^2-2)/4)$ design ${\mathcal{D}}={\mathcal{D}}(q)$ via
the construction of Theorem \ref{sr} is quasi-symmetric with block intersection numbers $q^2(q-2)/4$ and $q^2(q-1)/4$.

In the sequel, the designs obtained by the above described construction of Blokhuis and Haemers will be called the designs of Blokhuis-Haemers type.

\begin{cor} \label{cor-b-h}
Let ${\mathcal{D}}(q)$ be a quasi-symmetric design of Blokhuis-Haemers type, where $q$ is a power of $2$, $q \ge 4$ . Then the binary code spanned by the rows of a block-by-point incidence matrix
of ${\mathcal{D}}(q)$ is doubly even and self-orthogonal. 
\end{cor}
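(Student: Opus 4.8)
The plan is to obtain the corollary as a direct specialization of Theorem~\ref{thm_qs}, so the whole task reduces to checking that design of Blokhuis-Haemers type satisfies the three numerical hypotheses of that theorem. Recall that $\mathcal{D}(q)$ has parameters $2$-$(q^3,\, q^2(q-1)/2,\, q(q^3-q^2-2)/4)$ and the two block intersection numbers $x = q^2(q-2)/4$ and $y = q^2(q-1)/4$. Since $q$ is a power of $2$ with $q \ge 4$, I would write $q = 2^m$ with $m \ge 2$ and track the $2$-adic valuations of $v$, $k$, $x$ and $y$.

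Concretely, I would verify: (i) $v = q^3 = 2^{3m}$ with $3m \ge 6$, so $8 \mid v$; (ii) $k = q^2(q-1)/2 = 2^{2m-1}(q-1)$ with $2m-1 \ge 3$, so in fact $8 \mid k$, in particular $4 \mid k$; (iii) $y = q^2(q-1)/4 = 2^{2m-2}(q-1)$ with $2m-2 \ge 2$, hence $y$ is even, and $x = q^2(q-2)/4 = 2^{2m-2}(q-2)$, which is even because $q-2 = 2(2^{m-1}-1)$ is even. So all the intersection numbers are even and the divisibility conditions on $v$ and $k$ hold. I would point out that this is exactly where the assumption $q \ge 4$ enters: for $q = 2$ one would get $k = 2$ and $y = 1$, and the hypotheses of Theorem~\ref{thm_qs} would fail.

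Having checked the hypotheses, Theorem~\ref{thm_qs} yields a doubly even self-dual binary code $C'$ of length $v$ with $C \subseteq C'$. Since $C'$ is doubly even, every codeword of $C$ has weight divisible by $4$, so $C$ is doubly even; and since $C' = (C')^\perp \subseteq C^\perp$, we get $C \subseteq C^\perp$, i.e.\ $C$ is self-orthogonal. (Alternatively, once $C$ is known to be doubly even, self-orthogonality follows immediately from part~2 of Theorem~\ref{thm-self-orth}.)

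I do not expect a genuine obstacle here: the statement is essentially a bookkeeping consequence of Theorem~\ref{thm_qs}. The only thing requiring care is the elementary arithmetic — correctly identifying the powers of $2$ dividing $q^3$, $q^2(q-1)/2$, $q^2(q-2)/4$ and $q^2(q-1)/4$, and confirming that $m \ge 2$ (equivalently $q \ge 4$) is precisely the condition that makes all four quantities behave as required.
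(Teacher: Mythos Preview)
Your proposal is correct and follows exactly the paper's approach: state the parameters of $\mathcal{D}(q)$ and invoke Theorem~\ref{thm_qs}. You simply spell out the $2$-adic arithmetic and the passage from ``contained in a doubly even self-dual code'' to ``doubly even and self-orthogonal'' more explicitly than the paper does, but the argument is the same.
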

\begin{proof}
${\mathcal{D}}(q)$ is a quasi-symmetric 2-$(q^3,q^2(q-1)/2,q(q^3-q^2-2)/4)$ design with block intersection numbers $q^2(q-2)/4$ and $q^2(q-1)/4$. The statement now follows from Theorem \ref{thm_qs}.
\end{proof}

\begin{example} \label{ex-incidence-mat}
In order to illustrate the construction described in Corollary \ref{cor-b-h}, we construct doubly even self-orthogonal binary linear codes from the quasi-symmetric designs of Blokhuis-Haemers type with parameters 2-(64,24,46) having an automorphism group of order 128, that are given in \cite{qs-64}. These designs have block intersection numbers 8 and 12. According to \cite{qs-64}, there are 2699 such designs, 2696 designs having 2-rank 13 and three designs have 2-rank equal to 12. The binary linear codes spanned by the 2696 designs having 2-rank 13 are all pairwise isomorphic, and so are the binary linear codes spanned by the three designs having 2-rank equal to 12. Thus, we obtained two doubly even self-orthogonal codes, one with parameters $[64,13,24]$ and the other with parameters $[64,12,24]$.
The code with parameters $[64,13,24]$ has minimum distance equal to the best known $[64,13]$ binary linear code, and the code with parameters $[64,12,24]$ has minimum distance one less than the best known $[64,12]$ binary linear code (see \cite{codetables}).
The codes obtained are subcodes of the dual code $C^{\perp}$ of the binary code C of length $q^3$ spanned by the incidence vectors of the lines in AG$(3,q)$ (see \cite{qs-64}). 
Since the code $C^{\perp}$ has dimension 13, the obtained code with parameters $[64,13,24]$ is equal to $C^{\perp}$, and the other one is a subspace of $C^{\perp}$ of dimension 12.
The code $C^{\perp}$ has the full automorphism group of order 23224320, isomorphic to $\Gamma L(3,4)$ (see \cite{qs-64}), and the code with parameters $[64,13,24]$ has the full automorphism group of order
368640.
\end{example}

Self-orthogonal and doubly even codes can also be constructed from point-by-block incidence matrices of designs of Blokhuis-Haemers type.

\begin{cor} \label{cor-b-h-transpose}
Let ${\mathcal{D}}(q)$ be a quasi-symmetric design of Blokhuis-Haemers type, where $q$ is a power of $2$, $q \ge 4$ . Then the binary code spanned by the rows of a point-by-block incidence matrix
of ${\mathcal{D}}(q)$ is self-orthogonal. If $q \ge 8$, the code is doubly even. 
\end{cor}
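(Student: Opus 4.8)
The plan is to read off the relevant invariants of a point-by-block incidence matrix $N$ of $\mathcal{D}(q)$ and feed them into Theorem \ref{thm-self-orth}. Recall (as in Corollary \ref{cor-b-h}) that $\mathcal{D}(q)$ is a $2$-$(q^3,\,q^2(q-1)/2,\,q(q^3-q^2-2)/4)$ design. The row of $N$ indexed by a point $P$ is the incidence vector of the set of blocks through $P$; hence it has weight equal to the replication number $r=\lambda(v-1)/(k-1)$, and for two distinct points $P,P'$ the inner product over $\F_2$ of the corresponding rows of $N$ is congruent mod $2$ to the number of blocks on both $P$ and $P'$, which equals $\lambda$ because $\mathcal{D}(q)$ is a $2$-design.

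First I would compute $r$ from the parameters; a short calculation gives $r=q(q^3-1)/2$. Writing $q=2^m$ with $m\ge 2$, one finds $r=2^{m-1}(2^{3m}-1)$ and $\lambda=q(q^3-q^2-2)/4=2^{m-1}(2^{3m-1}-2^{2m-1}-1)$, with the remaining factor odd in each case. Thus for $q\ge 4$ (that is, $m\ge 2$) both $r$ and $\lambda$ are even, so every row of $N$ is orthogonal to itself and to every other row over $\F_2$; since these rows span $C$, it follows that $C\subseteq C^{\perp}$, i.e.\ $C$ is self-orthogonal.

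For the doubly-even assertion I would additionally invoke $q\ge 8$ (that is, $m\ge 3$), which promotes $r=2^{m-1}(2^{3m}-1)$ to a multiple of $4$. Choosing from among the rows of $N$ a subset that forms a basis of $C$ produces a generator matrix of $C$ each of whose rows has weight $r\equiv 0\bmod 4$; since $C$ has already been shown to be self-orthogonal, Theorem \ref{thm-self-orth}(1) then gives that every codeword of $C$ has weight divisible by $4$.

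There is no structural obstacle here beyond knowing the parameters of $\mathcal{D}(q)$, which are already recorded in this section; the only real work is the $2$-adic bookkeeping that pins down exactly which power of $2$ divides $r$ and $\lambda$ as a function of $m$, and that makes clear why the thresholds are $q\ge 4$ for self-orthogonality and $q\ge 8$ for the doubly-even property. One point I would state explicitly is that Theorem \ref{thm-self-orth}(1) is phrased in terms of a generator matrix, so the passage from the full spanning set of weight-$r$ rows of $N$ to a basis consisting of weight-$r$ rows should be spelled out.
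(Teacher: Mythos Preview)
Your proof is correct and follows the same approach the paper takes: the paper's proof is the one-line observation that ${\mathcal{D}}(q)$ has the stated $2$-design parameters with $r=q(q^3-1)/2$, leaving implicit exactly the $2$-adic bookkeeping on $r$ and $\lambda$ and the appeal to Theorem~\ref{thm-self-orth} that you have spelled out. Your explicit passage from the spanning rows of $N$ to a basis of weight-$r$ rows is a nice clarification, but it is not a different route.
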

\begin{proof}
The statement follows from the fact that ${\mathcal{D}}(q)$ is a quasi-symmetric 2-$(q^3,q^2(q-1)/2,q(q^3-q^2-2)/4)$ design with the replication number $r=\frac{q(q^3-1)}{2}$.
\end{proof}

\section{Doubly even self-orthogonal codes from orbit matrices of quasi-symmetric designs} \label{code-QS-orbmat}
 
In this section, we study orbit matrices of quasi-symmetric designs and give a method of constructing self-orthogonal codes from orbit matrices of quasi-symmetric designs. In particular, we use orbit matrices of quasi-symmetric designs of Blokhuis-Haemers type to construct doubly even self-orthogonal codes.

\subsection{Orbit matrices of quasi-symmetric designs}

Orbit matrices of 2-designs have been used since the 80s of the last century for a construction of 2-designs (see e.g. \cite{dc-mop, dc-as-176, ZJ-Gaeta, ZJ-VDT-91, ZJ-TvT-78}). More recently, orbit matrices of 2-designs are used to construct self-orthogonal codes (see \cite{so-codes-2-des, H-T-fpf}). Here, using a connection between quasi-symmetric designs and strongly regular graphs, we extend these studies and give one extra condition on orbit matrices that can be applied just for quasi-symmetric designs. This additional condition can be applied to speed up a construction of quasi-symmetric designs with a prescribed automorphism group, but also for a construction of self-orthogonal codes.
Especially, in this paper we use this condition to construct doubly even self-orthogonal codes from orbit matrices of quasi-symmetric designs of Blokhuis-Haemers type.

\bigskip

Let $\mathcal{D}$ be a $2$-$(v,k,\lambda)$ design with the replication number $r$, and $G\leq {\rm Aut}(\mathcal{D})$. We denote the $G$-orbits of points by $\mathcal{P}_1,\dots,\mathcal{P}_m$,
$G$-orbits of blocks by $\mathcal{B}_1,\dots,\mathcal{B}_n$, and put $|\mathcal{P}_i|=\omega_i$, $|\mathcal{B}_j|=\Omega_j$, $1\leq i\leq m$, $1\leq j\leq n$.

Further, we denote by $\gamma_{ij}$ the number of blocks of $\mathcal{B}_j$ incident with a representative of the point orbit $\mathcal{P}_i$. The following equalities hold:
\begin{align}
&0 \leq \gamma_{ij} \leq \Omega_j, \quad  1\leq i\leq m, 1\leq j\leq n,   \label{p1}\\
&\displaystyle\sum_{j=1}^n \gamma_{ij}=r, \quad 1\leq i\leq m, \label{p2}\\
&\displaystyle\sum_{i=1}^m \frac{\omega_i}{\Omega_j} \gamma_{ij}=k, \quad 1\leq j\leq n, \label{p3}\\
&\displaystyle\sum_{j=1}^n \frac{\omega_t}{\Omega_j}\gamma_{sj}\gamma_{tj}=\lambda\omega_t+\delta_{st}\cdot(r-\lambda), \quad 1\leq s,t \leq m. \label{p4}
\end{align}

A $(m \times n)$-matrix $M = ({\gamma}_{ij})$ with entries satisfying conditions $(\ref{p1})-(\ref{p4})$ is called a point orbit matrix of a $2$-$(v,k,\lambda)$ design with orbit lengths distributions 
$(\omega_{1}, \ldots ,\omega_{m})$ and $(\Omega_{1}, \ldots ,\Omega_{n})$.

In \cite{ding-tonchev}, Ding et al. classified quasi-symmetric designs with parameters $2$-$(28,12,11)$, having an automorphism of order $7$ with the use of orbit matrices.  Recently, Kr\v cadinac and Vlahovi\' c Kruc in \cite{krcko-renata}, used orbit matrices for a construction of quasi-symmetric designs on 56 points. In the cited papers, the authors use the equations for orbit matrices of 2-designs but also develop an extra one that can be used for quasi-symmetric designs. Here, we extend these studies using a connection between quasi-symmetric designs and strongly regular graphs and give one additional condition on orbit matrices that can be applied just to quasi-symmetric designs.

In the case when $D$ is a quasi-symmetric design, one can define its block graph $\Gamma(D)$, with vertices representing the blocks such that two vertices are adjacent if the corresponding blocks intersect in $y$ points. It is well known that if $\Gamma(D)$ is a connected graph, then it is a strongly regular graph. In that case, one can use the definition and properties of orbit matrices of strongly regular graphs given in \cite{om-srg} and apply to orbit matrices of quasi-symmetric designs. Note that in \cite{B-L} Behbahani and Lam studied a construction of strongly regular graphs from orbit matrices for an automorphism group of prime order.


Let $\Gamma(D)$ be a {\sf SRG}$(b, a, c, d)$ and $A$ be its adjacency matrix. Since the vertices of the graph $\Gamma(D)$ correspond to the blocks of corresponding design $D$, we have the following. Suppose an automorphism group $G$ of $\Gamma (D)$ partitions the set of vertices $V$ into $n$ orbits $\mathcal{B}_1,\dots,\mathcal{B}_n$, with sizes $\Omega_1, \ldots , \Omega_n$, respectively. This partition of $V$ is equitable, and the quotient matrix $R = [r_{ij}]$, where $r_{ij}$ represents the number of blocks from the block orbit $\mathcal{B}_j$ that is intersecting the block from the block orbit $\mathcal{B}_i$ in $y$ points, satisfy the following conditions
\begin{align}
0 \leq r_{ij} &\leq \Omega_j - \delta_{ij}, \quad  1\leq i, j\leq n,   \label{s1}\\
\sum_{j=1}^n r_{ij}&=\sum_{i=1}^n \frac{\Omega_i}{\Omega_j} r_{ij}=a,  \label{s2}\\
\sum_{s=1}^n  \frac{\Omega_s}{\Omega_j} r_{si} r_{sj}&= \delta_{ij} (a- d ) + d \Omega_i + ( c - d) r_{ji}. \label{s3}
\end{align}

A $(n \times n)$-matrix $R = [r_{ij}]$ with entries satisfying conditions (\ref{s1}), (\ref{s2}) and (\ref{s3})
is called a row orbit matrix for a strongly regular graph with parameters $(b,a, c, d)$ and orbit lengths distribution $(\Omega_1, \ldots ,\Omega_n)$.


Since the block graph of a quasi-symmetric design is strongly regular, one can obtain a connection of a point orbit matrix of the design and a row orbit matrix of its block graph in order to obtain the equations for point orbit matrix which will be valid just for quasi-symmetric block designs.

Let $B_j\in\mathcal{B}_j$ and let us count the number of elements in the set $\mathcal{S}=\{(P,B)\in \mathcal{P}\times\mathcal{B}_{j'}\, |P\in\left\langle B\right\rangle\cap\left\langle B_j\right\rangle\}$, where $\left\langle B\right\rangle$ represents the set of points contained in the block $B$ and the same goes for $\left\langle B_j\right\rangle$. We get the following:

\begin{align}
\displaystyle\frac{1}{\Omega_j}\sum_{i=1}^m \omega_i\gamma_{ij}\gamma_{ij'} =\displaystyle\sum_{B\in\mathcal{B}_{j'}}|\left\langle B\right\rangle\cap\left\langle B_j\right\rangle\ |= r_{jj'}(y-x)+\Omega_{j'}x+(k-x)\delta_{jj'}. \label{p5}
\end{align}

With the equation (\ref{p5}) we can reduce the number of possible point orbit matrices for quasi-symmetric designs with certain parameters and prescribed orbit length distributions. In the next section, we use equation (\ref{p5}) for a construction of self-orthogonal codes and doubly even codes from orbit matrices for quasi-symmetric designs.

\subsection{Construction of self-orthogonal codes from orbit matrices of quasi-symmetric designs} 

Rows of point orbit matrices of 2-designs can be used for a construction of self-orthogonal codes (see \cite{so-codes-2-des}). In the following theorems, we give methods of constructing self-orthogonal codes from columns of point orbit matrices of quasi-symmetric designs.

\begin{thm} \label{thm-orbmat}
Let $G$ be an automorphism group of a quasi-symmetric $(v, k, \lambda)$ design $\mathcal{D}$ with block intersection numbers $x$ and $y$. Further, let $G$ act on the set of points and the set of blocks of 
$\mathcal{D}$ in orbits of the same size $\omega$. If $p$ is a prime dividing $k$, $x$ and $y$, then the columns of the point orbit matrix of the design $\mathcal{D}$ with respect to $G$ span a self-orthogonal code of length $\frac{v}{\omega}$ over the field $\mathbb{F}_{q}$, where $q=p^n$.
\end{thm}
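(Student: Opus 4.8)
The plan is to compute the standard Euclidean inner product of two columns of the point orbit matrix $M=(\gamma_{ij})$ directly from equation (\ref{p5}), and then to read off divisibility by $p$ from the hypotheses $p\mid k$, $p\mid x$, $p\mid y$. First I would use the assumption that every point orbit and every block orbit has the same length $\omega$, so that $\omega_i=\Omega_j=\omega$ for all $i,j$. Writing $M^{(j)}$ for the $j$-th column of $M$ and substituting these equalities into (\ref{p5}) collapses its left-hand side to
\[
\frac{1}{\omega}\sum_{i=1}^m \omega\,\gamma_{ij}\gamma_{ij'}=\sum_{i=1}^m \gamma_{ij}\gamma_{ij'},
\]
which is precisely the inner product $\langle M^{(j)},M^{(j')}\rangle$ in $\Z^m$, while the right-hand side becomes $r_{jj'}(y-x)+\omega x+(k-x)\delta_{jj'}$.

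Next I would check that this right-hand side is divisible by $p$ for all $1\le j,j'\le n$: since $p\mid x$ and $p\mid y$ we get $p\mid (y-x)$, hence $p\mid r_{jj'}(y-x)$; since $p\mid x$ we get $p\mid \omega x$; and since $p\mid k$ and $p\mid x$ we get $p\mid (k-x)$, hence $p\mid (k-x)\delta_{jj'}$. Consequently $\sum_{i=1}^m \gamma_{ij}\gamma_{ij'}\equiv 0 \pmod p$ for every ordered pair $(j,j')$, the case $j=j'$ included, so in particular each column is orthogonal to itself modulo $p$.

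Finally I would pass to the code $C$ of length $v/\omega=m$ over $\F_q$ with $q=p^n$, spanned by the columns of $M$. Reducing the (integer) entries of $M$ modulo $p$ is legitimate since $\F_p\subseteq\F_q$, and the inner product of two columns computed in $\F_q^m$ equals the integer $\sum_i \gamma_{ij}\gamma_{ij'}$ reduced modulo $p$, which we have shown is $0$. Thus the generators $M^{(1)},\dots,M^{(n)}$ of $C$ are pairwise orthogonal in $\F_q^m$, and by bilinearity of the inner product $\langle u,w\rangle=0$ for all $u,w\in C$, i.e.\ $C\subseteq C^\perp$, so $C$ is self-orthogonal of the claimed length.

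I do not expect a genuinely hard step once equation (\ref{p5}) is available; the content is essentially bookkeeping. The only points requiring care are: verifying that the equal-orbit-length hypothesis is exactly what turns (\ref{p5}) into a relation among column inner products (without it the factors $\omega_i/\Omega_j$ do not cancel); confirming that the three divisibility hypotheses on $k$, $x$, $y$ together annihilate \emph{every} summand on the right-hand side of (\ref{p5}); and checking the routine compatibility of reduction modulo $p$ with computing inner products over the extension field $\F_q$, together with the identification of the code length $v/\omega$ with the row-count $m$ of $M$.
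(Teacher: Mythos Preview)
Your proof is correct and follows exactly the route the paper intends: the paper's own proof is the single line ``The statement follows from (\ref{p5}),'' and your argument is precisely the unpacking of that sentence---substituting $\omega_i=\Omega_j=\omega$ into (\ref{p5}) to obtain the column inner products and then reading off divisibility by $p$ from the hypotheses on $k$, $x$, $y$.
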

\begin{proof}
The statement follows from  (\ref{p5}).
\end{proof}

Given an orbit matrix $M$, the rows and columns that correspond to the non-fixed points and the non-fixed blocks form a submatrix called the non-fixed part of the orbit matrix $M$.

\begin{thm} \label{thm-orbmat-non-fix}
Let $G$ be an automorphism group of a quasi-symmetric $(v, k, \lambda)$ design $\mathcal{D}$ with block intersection numbers $x$ and $y$, and $M$ be the point orbit matrix of $\mathcal{D}$ with respect to $G$. Further, let $G$ act on $\mathcal{D}$ with $f$ fixed points, $h$ fixed blocks, and all other orbits of the same size $\omega$. If a prime $p$ divides $\omega$, $y-x$ and $k-x$, then the columns of the non-fixed part of the orbit matrix $M$ span a self-orthogonal code over $\mathbb{F}_{q}$, where $q=p^n$.
\end{thm}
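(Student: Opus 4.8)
The plan is to imitate the proof of Theorem~\ref{thm-orbmat}, but to split the sum in identity~(\ref{p5}) according to fixed and non-fixed point orbits and to check that the fixed part contributes nothing modulo $p$.

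I would begin by fixing two non-fixed block orbits $\mathcal{B}_j$ and $\mathcal{B}_{j'}$, so that $\Omega_j=\Omega_{j'}=\omega$, and applying~(\ref{p5}) to this pair. Writing $F$ for the set of indices of the $f$ fixed point orbits and $N$ for the set of indices of the non-fixed point orbits (for which $\omega_i=\omega$), and multiplying~(\ref{p5}) through by $\omega$, one gets
\[
\sum_{i\in F}\gamma_{ij}\gamma_{ij'}+\omega\sum_{i\in N}\gamma_{ij}\gamma_{ij'}
= \omega\Big(r_{jj'}(y-x)+\omega x+(k-x)\delta_{jj'}\Big).
\]
The quantity $\sum_{i\in N}\gamma_{ij}\gamma_{ij'}$ is exactly the standard inner product (over $\mathbb{F}_q$) of the $j$-th and $j'$-th columns of the non-fixed part of $M$, and the case $j=j'$ is the diagonal condition required for self-orthogonality; so it suffices to show $\sum_{i\in N}\gamma_{ij}\gamma_{ij'}\equiv 0\pmod p$ for all such $j,j'$.

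The one step that is not pure bookkeeping is the analysis of the fixed part. For a fixed point $P$, with orbit index $i\in F$, the number $\gamma_{ij}$ of blocks of $\mathcal{B}_j$ through $P$ lies in $\{0,\omega\}$: since $G$ fixes $P$ and acts transitively on $\mathcal{B}_j$, as soon as one block of $\mathcal{B}_j$ contains $P$, all of them do. Hence $\gamma_{ij}\gamma_{ij'}\in\{0,\omega^2\}$ for each $i\in F$, so $\sum_{i\in F}\gamma_{ij}\gamma_{ij'}=c\,\omega^2$ for some non-negative integer $c$. Substituting this into the displayed identity and dividing by $\omega$ gives
\[
\sum_{i\in N}\gamma_{ij}\gamma_{ij'}=r_{jj'}(y-x)+\omega x+(k-x)\delta_{jj'}-c\,\omega .
\]

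Finally I would reduce modulo $p$. Since $p\mid\omega$, $p\mid y-x$ and $p\mid k-x$ by hypothesis, every term on the right-hand side is divisible by $p$, so $\sum_{i\in N}\gamma_{ij}\gamma_{ij'}\equiv 0\pmod p$ and hence this inner product is $0$ in $\mathbb{F}_q$. As this holds for every pair of columns of the non-fixed part (the diagonal included) and the inner product is bilinear, the $\mathbb{F}_q$-span of these columns is self-orthogonal, which is the assertion. The main obstacle in the argument is precisely the fixed-orbit term $\sum_{i\in F}\gamma_{ij}\gamma_{ij'}$: one has to observe the $\gamma_{ij}\in\{0,\omega\}$ dichotomy for fixed points to see that this term is divisible by $\omega^2$, after which the reduction modulo $p$ is immediate.
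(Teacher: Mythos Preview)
Your proposal is correct and follows essentially the same route as the paper's proof: split identity~(\ref{p5}) for two non-fixed block orbits into fixed and non-fixed point contributions, use the observation that $\gamma_{ij}\in\{0,\omega\}$ for fixed points to absorb the fixed part as a multiple of $\omega$, and then reduce modulo $p$. Your write-up is in fact slightly more explicit than the paper's, which stops at the identity $\sum_{i=f+1}^{t_v}\gamma_{ij}\gamma_{is}=r_{js}(y-x)+\omega(x-a)+(k-x)\delta_{js}$ without spelling out the final reduction.
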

\begin{proof}
The group $G$ acts on the set of points of $\mathcal{D}$ in $t_v=f + \frac{v-f}{\omega}$ orbits, and on the set of blocks of $\mathcal{D}$ in $t_b=h + \frac{b-h}{\omega}$ orbits. We can assume that
$\omega_i = 1$ for $i = 1, \ldots , f$, $\omega_i = \omega$ for $i = f + 1, \ldots , t_v$, $\Omega_i = 1$ for $i = 1, \ldots , h$, $\Omega_i = \omega$ for $i = h + 1, \ldots , t_b$.
For $h + 1 \le j, s \le t_b$ we have
\begin{align*}
\sum_{i=1}^{t_v} \frac{\omega_i}{\Omega_j}\gamma_{ij}\gamma_{is}&=
\sum_{i=1}^f\frac{1}{\omega}\gamma_{ij}\gamma_{is}+\sum_{i=f+1}^{t_v} \frac{\omega}{\omega}\gamma_{ij}\gamma_{is} \stackrel{({\text by} ~~(\ref{p5}))}{=}
r_{js}(y-x)+\omega x+(k-x)\delta_{js}.
\end{align*}
For $1 \le i \le f$ and $h+1 \le j,s \le t_b$, we have $\gamma_{ij}$, $\gamma_{is} \in \{0, \omega\}$, and therefore it holds that $\gamma_{ij}\gamma_{is}\in \{0,\omega^2\}$. Hence,
\begin{align*}
\sum_{i=f+1}^{t_v} \gamma_{ij}\gamma_{is}&=
r_{js}(y-x)+\omega (x-a)+(k-x)\delta_{js},
\end{align*}
where $a=|\{i\in\{1,\ldots,h\}|\thinspace \gamma_{ij}\gamma_{is}=\omega^2 \}|$.
\end{proof}

The following theorem gives us a method of constructing doubly even self-orthogonal codes spanned by the columns of the non-fixed part of a point orbit matrix of a quasi-symmetric design of Blokhuis-Haemers type.

\begin{thm} \label{thm-orbmat-B-H}
Let $\mathcal{D}(q)$ be a quasi-symmetric design of Blokhuis-Haemers type, where $q \ge 4$. 
Further, let $G$ be an automorphism group of $\mathcal{D}(q)$, acting on $\mathcal{D}(q)$ with $f$ fixed points and $h$ fixed blocks, and all other orbits of size $2$, and $M$ be the point orbit matrix of $\mathcal{D}(q)$ with respect to $G$. 
Then the binary code spanned by the columns of the non-fixed part of the point orbit matrix $M$ is a doubly even self-orthogonal code.
\end{thm}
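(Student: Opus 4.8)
The plan is to establish the two parts separately: self-orthogonality, which follows directly from Theorem \ref{thm-orbmat-non-fix}, and the stronger fact that every column of the non-fixed part of $M$ has Hamming weight divisible by $4$, which together with part (1) of Theorem \ref{thm-self-orth} gives that the code is doubly even. First I would record the relevant arithmetic. A design $\mathcal{D}(q)$ of Blokhuis-Haemers type is a quasi-symmetric $2$-$(q^3, q^2(q-1)/2, q(q^3-q^2-2)/4)$ design with block intersection numbers $x=q^2(q-2)/4$ and $y=q^2(q-1)/4$, so $y-x=q^2/4$ and $k-x=q^3/4$; writing $q=2^m$ with $m\ge 2$ (which is forced, since such designs exist only for $q$ a power of $2$), a short computation shows that $x$, $k$ and $y-x$ are all divisible by $4$, while $k-x$ is even. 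In particular the prime $2$ divides $\omega=2$, $y-x$ and $k-x$, so Theorem \ref{thm-orbmat-non-fix} applies with $p=2$ and shows that the binary code $C$ spanned by the columns of the non-fixed part of $M$ is self-orthogonal.

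For the divisibility of the column weights, fix a non-fixed block orbit $\mathcal{B}_j=\{B_j,B_j'\}$, so $\Omega_j=2$. A point fixed by $G$ is contained in $B_j$ if and only if it is contained in $B_j'$, so $\gamma_{ij}\in\{0,2\}$ whenever the point orbit $i$ is fixed, whereas $\gamma_{ij}\in\{0,1,2\}$ for a non-fixed point orbit $i$. Let $A_j$ be the number of fixed points incident with $B_j$, and let $n_1$ and $n_2$ be the numbers of non-fixed point orbits $i$ with $\gamma_{ij}=1$ and $\gamma_{ij}=2$ respectively; then $n_1$ is exactly the weight over $\F_2$ of the column of the non-fixed part indexed by $\mathcal{B}_j$. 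Substituting these values into (\ref{p3}) gives $A_j+n_1+2n_2=k$, and substituting into the diagonal case $j=j'$ of (\ref{p5})---using that $\gamma_{ij}^2\in\{0,4\}$ on fixed point orbits, $\gamma_{ij}^2\in\{0,1,4\}$ on non-fixed ones, and $\Omega_j=2$---gives $2A_j+n_1+4n_2 = r_{jj}(y-x)+2x+(k-x)$. Subtracting the first relation from the second yields $A_j+2n_2 = r_{jj}(y-x)+x$, which is divisible by $4$ since $4\mid y-x$ and $4\mid x$; plugging this into $n_1=k-(A_j+2n_2)$ and using $4\mid k$ gives $n_1\equiv 0\pmod 4$.

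Since every column of the non-fixed part of $M$ has weight divisible by $4$, any generator matrix of $C$ chosen so that its rows form a subset of these columns has all rows of weight divisible by $4$; as $C$ is self-orthogonal, part (1) of Theorem \ref{thm-self-orth} gives that $C$ is doubly even. I expect the main obstacle to be the combinatorial bookkeeping in the middle step: one must separate the fixed-point contributions (which force $\gamma_{ij}=2$ rather than $\gamma_{ij}\le 1$), keep track that $\Omega_j=2$ so that the term $\Omega_{j'}x$ of (\ref{p5}) becomes $2x$, and combine (\ref{p3}) and (\ref{p5}) in precisely the way that makes the a priori unknown quantities $A_j$ and $n_2$ cancel modulo $4$, leaving a clean statement about the column weight $n_1$. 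The underlying number-theoretic divisibilities---$4\mid x$, $4\mid y-x$ and $4\mid k$ for every power of $2$ that is at least $4$---are routine but should be stated explicitly.
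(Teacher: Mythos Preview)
Your proof is correct and follows essentially the same route as the paper's own argument: self-orthogonality via Theorem \ref{thm-orbmat-non-fix}, then showing that every column of the non-fixed part has weight $n_1\equiv 0\pmod 4$ by analysing the diagonal case of (\ref{p5}) for a block orbit of size $2$. The only cosmetic difference is in how the auxiliary quantity $A_j+2n_2$ is handled: the paper observes directly that $A_j+2n_2=|B_j\cap B_j'|\in\{x,y\}$ (so in particular $A_j$ is even) and then reads off $n_1\equiv 0\pmod 4$ from the fact that the right-hand side of (\ref{p5}) is divisible by $4$, whereas you recover the same identity $A_j+2n_2=r_{jj}(y-x)+x$ algebraically by subtracting (\ref{p3}) from (\ref{p5}) and then use $4\mid k$ via $n_1=k-(A_j+2n_2)$. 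Both packagings are equivalent; your version has the mild advantage of not needing the separate observation that $4\mid(k-x)$.
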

\begin{proof}
The design $\mathcal{D}(q)$ has parameters $2$-$(q^3,q^2(q-1)/2,q(q^3-q^2-2)/4)$ and the block intersection numbers $x=q^2(q-2)/4$ and $y=q^2(q-1)/4$. 
Since $q$ is a power of 2 and $q \ge 4$, the numbers $k=q^2(q-1)/2$, $x$ and $y$ are divisible by 4. It follows from Theorem \ref{thm-orbmat-non-fix} that the non-fixed part of the orbit matrix $M$ span a self-orthogonal code over $\mathbb{F}_{2}$.

Assume that the block orbits are ordered in a way that for their lengths it holds that $\Omega_i = 1$ for $i = 1, \ldots , h$, $\Omega_i = 2$ for $i = h + 1, \ldots , t_b$, 
where $t_b$ is the number of $G$-orbits on the blocks of $\mathcal{D}(q)$.
Similarly, let $t_v$ be the number of $G$-orbits on points and $\omega_i = 1$ for $i = 1, \ldots , f$, $\omega_i = 2$ for $i = f + 1, \ldots , t_v$.
Further, let $M$ be the point orbit matrix of $\mathcal{D}(q)$ with respect to $G$.

Let $j,s \in \{ h+1, h+2, \ldots , t_b \}$. If follows from equation (\ref{p5}) that 
$$\frac{1}{\Omega_j} \sum_{i=1}^{t_v} \omega_i \gamma_{ij}\gamma_{is}=\sum_{i=1}^f\frac{1}{2}\gamma_{ij}\gamma_{is}+\sum_{i=f+1}^{t_v} \gamma_{ij}\gamma_{is}$$ 
is divisible by 4. Let us consider the case $j=s$. Two blocks belonging to the $jth$ orbit intersect in either $x$ or $y$ points, so the cardinality of their intersection is divisible by 4. These two blocks intersect in one point of the $ith$ point orbit if $\gamma_{ij}=2$ and $1 \le i \le f$, in two points if $\gamma_{ij}=2$ and $f+1 \le i \le t_v$, and do not intersect otherwise. 

Therefore, for
$j \in \{ h+1, h+2, \ldots , t_b \}$ it holds that
$$| \{ \gamma_{ij}=2|\thinspace 1 \le i \le f \} |$$
is an even number. It follows that $\sum_{i=1}^f\frac{1}{2}\gamma_{ij}^2$ is divisible by 4, so $\sum_{i=f+1}^{t_v} \gamma_{ij}^2$ is also divisible by 4. Hence,
$$| \{ \gamma_{ij}=1|\thinspace f+1 \le i \le t_v \} |$$
is divisible by 4, which implies that the self-orthogonal binary code spanned by the columns of the non-fixed part of the orbit matrix $M$ is doubly even.  
\end{proof}

\begin{example} \label{ex-orbit-mat}
To illustrate the construction given in Theorem \ref{thm-orbmat-B-H}, we construct doubly even self-orthogonal binary linear codes from orbit matrices of involutions acting on the 2699 quasi-symmetric designs of Blokhuis-Haemers type with parameters 2-(64,24,46) having an automorphism group of order 128 (see \cite{qs-64}). These designs, that have the block intersection numbers 8 and 12, are also used in Example \ref{ex-incidence-mat}. 
The involutory automorphisms act on these 2699 designs in the following way:
\begin{itemize}
 \item fixed-point-free, 16 fixed blocks,
 \item 4 fixed points, 28 fixed blocks,
 \item 8 fixed points, 32 fixed blocks.
\end{itemize}
The information on the codes constructed is given in Table \ref{table1}. Some of the codes obtained are optimal.

\begin{table}[h]
\begin{center}
\begin{tabular}{c | c | c | c | c | c | c}
 \hline  \# fixed & code & parameters  & $|{\rm Aut}(C_i)|$ & self-      & doubly & optimal \\
          points  &      & of $C_i$    &                    & orthogonal & even   &         \\
\hline\hline
 $0$ & $C_1$ & $[32,5,16]_2$ & $21139292160$     & yes & yes & yes \\
 $0$ & $C_2$ & $[32,4,16]_2$ & $147941222252544$ & yes & yes & yes \\
 $4$ & $C_3$ & $[28,6,12]_2$ & $40320$           & yes & yes & yes \\
 $4$ & $C_4$ & $[28,5,12]_2$ & $73728$           & yes & yes & no  \\
 $8$ & $C_5$ & $[24,4,12]_2$ & $2359296$         & yes & yes & yes \\
 $8$ & $C_6$ & $[24,3,12]_2$ & $4586471424$      & yes & yes & no  \\
\hline\hline
\end{tabular}
\end{center}
\caption{\small Codes from orbit matrices of 2-(64,24,46) designs of Blokhuis-Haemers type} \label{table1}
\end{table}

The full automorphism group of the code $C_3$ with parameters $[28,6,12]$ is isomorphic to the symmetric group $S_8$. The weight distribution of $C_3$ is $[ \langle 0, 1\rangle , \langle 12, 28\rangle, \langle 16, 35\rangle ]$, and the minimum distance of $C_3^{\perp}$ is 3. Hence, $C_3$ is a projective two-weight code. The code $C_3$ was previously known, since the projective two-weight codes with parameters $[28,6,12]$ are classified in \cite{uniformly-packed}. The strongly regular graph obtained from the two-weight code $C_3$ has parameters $(64,28,12,12)$ and the full automorphism group of order 2580480, isomorphic to $Z_2^6:S_8$. Up to isomorphism, there are 15 subgroups of $Z_2^6:S_8$ which are isomorphic to $Z_2^6$. Two of these subgroups isomorphic to $Z_2^6$ act regularly on the set of vertices of the strongly regular graph with parameters $(64,28,12,12)$, one of them is normal in $Z_2^6:S_8$ and the other one is not normal. These two regular actions of the subgroups isomorphic to the elementary abelian group $Z_2^6$ correspond to difference sets (see \cite{Menon}). The adjacency matrix of a strongly regular graph with parameters $(v,k, \lambda, \lambda)$ is the incidence matrix of a symmetric design with parameters $(v,k, \lambda)$. Hence, the strongly regular graph obtained from the code $C_3$ corresponds to a symmetric $(64,28,12)$ design, which is the development of a Hadamard difference set (see \cite{difference-sets-CRC}).

The codes $C_1$, $C_2$, $C_4$, $C_5$ and $C_6$ are also two-weight codes. However, the dual codes of the codes $C_1$, $C_2$, $C_4$, $C_5$ and $C_6$ have minimum distance 2, so these codes are not projective. 
\end{example}

Under certain conditions, the rows of the non-fixed part of the point orbit matrix of a quasi-symmetric design of Blokhuis-Haemers type can also be used for a construction of doubly even self-orthogonal codes.

\begin{thm} \label{thm-orbmat-B-H-rows}
Let $\mathcal{D}(q)$ be a quasi-symmetric design of Blokhuis-Haemers type, where $q \ge 2$. 
Further, let $G$ be an automorphism group of $\mathcal{D}(q)$, acting on $\mathcal{D}(q)$ with $f$ fixed points and $h$ fixed blocks, and all other orbits of size $2$, and $M$ be the point orbit matrix of $\mathcal{D}(q)$ with respect to $G$. 
Then the binary code spanned by the rows of the non-fixed part of the point orbit matrix $M$ is self-orthogonal. If $q \ge 4$, the code is doubly even.
\end{thm}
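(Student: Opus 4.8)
The plan is to imitate the proof of Theorem~\ref{thm-orbmat-B-H}, but to replace the quasi-symmetric identity~(\ref{p5}) by the general orbit-matrix identities (\ref{p2}) and (\ref{p4}): since rows of the point orbit matrix correspond to point orbits, the relevant incidence data are now ``a point lies on $r$ blocks'' and ``two points lie on $\lambda$ common blocks'' rather than block intersections. First I would recall that $\mathcal{D}(q)$ has parameters $2$-$(q^{3},q^{2}(q-1)/2,q(q^{3}-q^{2}-2)/4)$ with replication number $r=q(q^{3}-1)/2$, and order the orbits so that $\omega_{i}=\Omega_{i}=1$ for the fixed ones and $\omega_{i}=\Omega_{i}=2$ otherwise, with $f$ fixed points and $h$ fixed blocks. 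By (\ref{p1}), for a non-fixed point orbit $\mathcal{P}_{s}$ and a non-fixed block orbit $\mathcal{B}_{j}$ we have $\gamma_{sj}\in\{0,1,2\}$, whereas $\gamma_{sj}\in\{0,1\}$ whenever $\mathcal{B}_{j}$ is a fixed block orbit. After reducing a row of the non-fixed part of $M$ modulo $2$, its $j$-th coordinate equals $1$ precisely when $\gamma_{sj}=1$.

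The key step is to compute the exact weight of such a reduced row. Fix a non-fixed orbit $\mathcal{P}_{s}$ and set $a_{s}=\sum_{j=1}^{h}\gamma_{sj}$ (the number of fixed blocks through a representative of $\mathcal{P}_{s}$), $n_{1}=|\{\,j>h:\gamma_{sj}=1\,\}|$ and $n_{2}=|\{\,j>h:\gamma_{sj}=2\,\}|$. Equation~(\ref{p2}) gives $a_{s}+n_{1}+2n_{2}=r$. In equation~(\ref{p4}) with $s=t$ the terms with a fixed block orbit carry the factor $\omega_{s}/\Omega_{j}=2$ while those with a non-fixed block orbit carry the factor $1$; using $\gamma_{sj}^{2}=\gamma_{sj}$ for $j\le h$ and $\gamma_{sj}^{2}\in\{0,1,4\}$ for $j>h$, it becomes $2a_{s}+n_{1}+4n_{2}=r+\lambda$. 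Subtracting the first relation from the second gives $a_{s}+2n_{2}=\lambda$, and feeding this back into the first yields the clean identity $n_{1}=r-\lambda=q^{3}(q+1)/4$. Writing $q=2^{m}$ with $m\ge 1$, we get $n_{1}=2^{3m-2}(q+1)$, so every row of the non-fixed part has even weight.

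It remains to check orthogonality of the rows and divisibility of their weights by $4$. For distinct non-fixed orbits $\mathcal{P}_{s},\mathcal{P}_{t}$, equation~(\ref{p4}) (with the fixed-block terms split off as above) gives $\sum_{j>h}\gamma_{sj}\gamma_{tj}=2\lambda-2\sum_{j\le h}\gamma_{sj}\gamma_{tj}$, which is even; since reduction modulo $2$ is a ring homomorphism, $\gamma_{sj}\gamma_{tj}\equiv(\gamma_{sj}\bmod 2)(\gamma_{tj}\bmod 2)\pmod 2$, so the left-hand side reduces modulo $2$ to the $\mathbb{F}_{2}$-inner product of the two reduced rows, which is therefore $0$. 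Hence the rows of the non-fixed part are pairwise orthogonal and of even weight, so the binary code they span is self-orthogonal for every $q\ge 2$. If in addition $q\ge 4$, i.e.\ $m\ge 2$, then $3m-2\ge 4$, so $n_{1}=2^{3m-2}(q+1)$ is divisible by $4$; thus the code has a generating set (the rows of the non-fixed part) all of whose weights are divisible by $4$, and the first statement of Theorem~\ref{thm-self-orth} shows that the code is doubly even.

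I expect the only delicate points to be purely bookkeeping: keeping track of which entries of $M$ may equal $2$ (only in a non-fixed row / non-fixed column position) as opposed to only $0$ or $1$, remembering that modulo $2$ the single ``odd'' entry value is $1$, and correctly carrying the weights $\omega_{s}/\Omega_{j}$ when restricting the sums in (\ref{p2}) and (\ref{p4}) to the non-fixed columns. The genuinely useful observation — that (\ref{p2}) and (\ref{p4}) telescope to the constant row weight $r-\lambda=q^{3}(q+1)/4$ — makes the divisibility claims immediate once $q$ is written as a power of $2$: the weight is even for every $q\ge 2$ and divisible by $16$, hence by $4$, for every $q\ge 4$, which is exactly what the two assertions of the theorem require.
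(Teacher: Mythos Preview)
Your proof is correct and follows essentially the same route as the paper's: both rest on the orbit-matrix identity~(\ref{p4}) together with the value $r-\lambda=q^{3}(q+1)/4$ for $\mathcal{D}(q)$. Your version is a bit more streamlined. By also invoking~(\ref{p2}) and subtracting, you obtain the exact formula $n_{1}=r-\lambda$ for the weight of every reduced row, so both conclusions (self-orthogonal for $q\ge 2$, doubly even for $q\ge 4$) follow at once from the $2$-adic valuation of $r-\lambda$. The paper treats the two conclusions separately: self-orthogonality is read directly off~(\ref{p4}) and the parity of $r-\lambda$, while the doubly-even claim uses an extra combinatorial step---counting the $\lambda$ common blocks through the two points of a non-fixed orbit to show that the number $a_{s}$ of fixed blocks through such a point is even---before deducing $n_{1}\equiv 0\pmod 4$. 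Your algebraic identity $a_{s}+2n_{2}=\lambda$ is precisely that combinatorial count, so the two arguments are equivalent; you have simply packaged them into one line.
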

\begin{proof}
Let $t_v$ be the number of point orbits and $t_b$ be the number of block orbits. Further, let $\Omega_i = 1$ for $i = 1, \ldots , h$, $\Omega_i = 2$ for $i = h + 1, \ldots , t_b$, 
$\omega_i = 1$ for $i = 1, \ldots , f$, and $\omega_i = 2$ for $i = f + 1, \ldots , t_v$.
Furthermore, let $M$ be the point orbit matrix of $\mathcal{D}(q)$ with respect to $G$.

Let $i,s \in \{ f+1, f+2, \ldots , t_v \}$. Then
$$\sum_{j=1}^n \frac{2}{\Omega_j}\gamma_{ij}\gamma_{sj} = \sum_{j=1}^h 2 \gamma_{ij}\gamma_{sj}+\sum_{j=h+1}^{t_b} \gamma_{ij}\gamma_{sj} = 2 \lambda+\delta_{is}\cdot(r-\lambda).$$
Since in the case of the design $\mathcal{D}(q)$ it holds that $r - \lambda = \frac{q^4 + q^3}{4}$, the binary code spanned by the rows of the non-fixed part of $M$ is self-orthogonal.

Let $q \ge 4$. Two points belonging to the $ith$ orbit, $f+1 \le i \le t_v$, belong to $\lambda$ common blocks, and $\lambda$ is even. 
Using a similar reasoning as in the proof of Theorem \ref{thm-orbmat-B-H}, we conclude that for $i \in \{ f+1, f+2, \ldots , t_v \}$ it holds that
$$| \{ \gamma_{ij}=1|\thinspace 1 \le j \le h \} |$$
is an even number. Hence, $\sum_{j=1}^h 2 \gamma_{ij}^2$ is divisible by 4. Since $2 \lambda+\delta_{is}\cdot(r-\lambda)$ is also divisible by four, 
it follows that $\sum_{j=h+1}^{t_b} \gamma_{ij}^2$ is divisible by four. Therefore, for every $i \in \{ f+1, f+2, \ldots , t_v\}$
$$| \{ \gamma_{ij}=1|\thinspace h+1 \le j \le t_b \} |$$
is divisible by 4. It follows that the binary linear code spanned by the rows of the non-fixed part of the orbit matrix $M$ is doubly even.  
\end{proof}

%
%
%
%
%
%

\vspace*{0.5cm}

\noindent {\bf Acknowledgement} \\
This work has been fully supported by Croatian Science Foundation under the project 5713.

\vspace*{0.2cm}

\end{document}